\theoremstyle{plain}
\newtheorem{theorem}{Theorem}
\newtheorem{corollary}{Corollary}
\theoremstyle{definition}
\newtheorem{definition}{Definition}
\newtheorem{remark}{Remark}
\newcommand{\N}{\mathbb{N}}
\newcommand{\R}{\mathbb{R}}
\newcommand{\Z}{\mathbb{Z}}
\def\id{\operatorname{id}}
\def\tr{\operatorname{tr}}
\newcommand{\drawaxes}{
\draw[color=black,line width =0.25pt] (-1.5,0) -- (1.5,0);
\foreach \x in {-1,1}
\draw[shift={(\x,0)},color=black,line width =1pt] (0pt,1pt) -- (0pt,-1pt);
\draw[color=black] (0,-1.5) -- (0,1.5);
\foreach \y in {-1,1}
\draw[shift={(0,\y)},color=black,line width =1pt] (1pt,0pt) -- (-1pt,0pt);
\draw[color=black,line width =0.25pt] (0pt,-10pt);
\clip(-1.6,-1.6) rectangle (1.6,1.6);
}
\newcommand{\drawinnerlines}{
\path [draw,line width =0.75pt] (0,0)--(1,1);
\path [draw,line width =0.75pt] (0,0)--(0,1);
\path [draw,line width =0.75pt] (0,0)--(-1,0);
\path [draw,line width =0.75pt] (0,0)--(1,-1);
\path [draw,line width =0.75pt] (0,0)--(2,1);
\path [draw,line width =0.75pt] (0,0)--(1,2);
\path [draw,line width =0.75pt] (0,0)--(-1,1);
\path [draw,line width =0.75pt] (0,0)--(0,-1);
\path [draw,line width =0.75pt] (0,0)--(1,0);
}
\newcommand{\drawouterlines}{
\path [draw,line width =0.75pt] (1,0)--(2,1);
\path [draw,line width =0.75pt] (2,1)--(1,1);
\path [draw,line width =0.75pt] (1,1)--(1,2);
\path [draw,line width =0.75pt] (1,2)--(0,1);
\path [draw,line width =0.75pt] (0,1)--(-1,1);
\path [draw,line width =0.75pt] (-1,1)--(-1,0);
\path [draw,line width =0.75pt] (-1,0)--(0,-1);
\path [draw,line width =0.75pt] (0,-1)--(1,-1);
\path [draw,line width =0.75pt] (1,-1)--(1,0);
}
\newcommand{\drawlines}{
\drawinnerlines
\drawouterlines
}
\newcommand{\drawdots}{
\fill [black] (1,0) circle (1.5pt);
\fill [black] (1,1) circle (1.5pt);
\fill [black] (0,1) circle (1.5pt);
\fill [black] (-1,0) circle (1.5pt);
\fill [black] (1,-1) circle (1.5pt);
\fill [black] (2,1) circle (1.5pt);
\fill [black] (1,2) circle (1.5pt);
\fill [black] (-1,1) circle (1.5pt);
\fill [black] (0,-1) circle (1.5pt);
}
\title[Piecewise linear  periodic maps]{Piecewise linear  periodic maps of the plane with integer coefficients}
\author{Grant Cairns}
\author{Yuri Nikolayevsky}
\author{Gavin Rossiter}
\address{Dept of Mathematics and Statistics, La Trobe University, Melbourne, Australia 3086}
\email{G.Cairns@latrobe.edu.au}
\email{Y.Nikolayevsky@latrobe.edu.au}
\email{gbrossiter@students.latrobe.edu.au}
\keywords{periodic homeomorphism, piecewise linear, plane, Brown's map}
\subjclass[2010]{37E30, 37C25}
\begin{document}

\maketitle


\section{Introduction}

In 1993,
 Morton Brown published an interesting piecewise linear, periodic map of the plane with integer coefficients, stating that it had found applications in dynamics, topology and combinatorics  \cite{Mort2}. The Zentralblatt  review \cite{Ch} of the paper reported:
 \begin{quotation}
This short article contains two proofs of a mathematical gem: The homeomorphism of the plane given by $(x,y)\mapsto (\vert x\vert - y,x)$ is periodic with period~9!
\end{quotation}
 Brown had posed the problem of proving its periodicity 10 years earlier in \cite{Mort1}, wherein the problem was expressed in terms of the recurrence relation $h_{n+1}=|h_{n}|-h_{n-1}$.
 In \cite{Mort2}, Brown gave  his own  geometric proof and a combinatorial proof due to Donald Knuth. He quotes Knuth:
\begin{quotation}
When I saw advanced problem 6439, I couldn't believe that it was `advanced': a result like that has to be either false or elementary. But I soon found that it wasn't trivial. There is a simple proof, yet I can't figure out how on earth anybody would discover such a remarkable result. Nor have I discovered any similar recurrence having the same property. So in a sense I have no idea how to solve the problem properly. Is there an `insightful' proof, or is the result simply true by chance?
\end{quotation}
The aim of this paper is to revisit Brown's intriguing map. We give four other maps, of order 5, 7, 8 and 12, which are very similar to Brown's map in that they are piecewise linear in two pieces, the right half plane and the left half plane. We show that there are no other possible orders for such maps. We then show that every integer $n>1$ appears as the period of a piecewise linear, periodic map of the plane with integer coefficients, provided we allow enough piecewise linear pieces. Finally, we explain how the collection of these maps can be classified in terms of rooted binary tress of fixed height.

\section{Background remarks}

In its simplest incarnation, the crystallographic restriction says that if $A$ is a $2\times 2$ matrix with integer coefficients and $A^n=\id$ for some natural number $n$, then $n=1,2,3,4$ or $6$.
For more information and related results, see \cite{KP,BCK}.
Here is a proof of the crystallographic restriction (we will need similar ideas shortly). Let $A$ be such a matrix. As $A^n=\id$, we have $\det(A)=\pm1$. Similarly, if $A$ has a complex eigenvalue $\lambda$, with complex eigenvector $z$, then $A^nz=z$ gives $\lambda^n=1$, so $\lambda$ is a root of unity.
So if $\lambda$ is real, then $\lambda=\pm1$. In this case, $A^2$ has determinant 1, and it has 1 as an eigenvalue, so its Jordan canonical form is $\left(\begin{smallmatrix}1& 0\\w&1\end{smallmatrix}\right)$, for some $w$.  Then as $A^2$ has finite order, one obtains $w=0$ and thus $A^2=\id$. If $\lambda$ is not real, then as $\lambda$ satisfies the characteristic polynomial, $\lambda^2-\tr(A)\cdot \lambda+\det(A)=0$,  it has real part $\frac12\tr(A)$. In this case, as $\tr(A)$ is an integer and $\lambda$ is a root of unity, we have $\tr(A)=0,\pm1$.
In this case, as $A$ has two distinct eigenvalues, $A$ is diagonalizable over $\mathbb C$; if $\tr(A)=0$, then $\lambda=\pm i$ and $A$ has order 4. If $\tr(A)=\pm1$, then $\lambda=\frac12(\pm1\pm\sqrt3)$ and $A$ has order 3 or 6. This proves the crystallographic restriction.

Note that when $A$ has nonreal eigenvalues, they are complex conjugates, so their product, which is $\det(A)$, is positive. So $\det(A)=1$. Choosing an arbitrary nonzero vector $z$ in $\R^2$, we can then consider the basis $\{-Az,z\}$, relative to which  $A$ is conjugate to the matrix $\left(\begin{smallmatrix}\tr(A)& -1\\1&0\end{smallmatrix}\right)$.
This gives the following  examples of matrices of order 3,4,6 respectively:
\[
\alpha = \begin{pmatrix}
	  -1   & -1\\
	  1   & 0
	 \end{pmatrix},\qquad
	 \beta = \begin{pmatrix}
	 0   & -1\\
	 1   & 0
	 \end{pmatrix},\qquad \gamma = \begin{pmatrix}
	  1   & -1\\
	  1   & 0
	 \end{pmatrix}.
\]
The periodicity of these matrices has a nice geometric description. In Figure \ref{F:lin}, the respective matrices map  region $i$ to region $i+1$, modulo the order of the matrix.

\begin{figure}[h!]
\begin{tikzpicture}[scale=1.5]
\drawaxes
\fill[yellow](0,0)--(1,-1)--(0,1);
\fill[green](0,0)--(0,1)--(-1,0);
\fill[cyan,path fading =north](0,0)--(-1,0)--(1,-1)--cycle;
\path [draw,line width =0.75pt] (0,0)--(-1,0);
\path [draw,line width =0.75pt] (0,0)--(0,1);
\path [draw,line width =0.75pt] (0,0)--(1,-1);
\path [draw,line width =0.75pt] (-1,0)--(1,-1)--(0,1)--(-1,0);
\node[](1) at (-0.3,0.3) {1};
\node[](3) at (0.3,0) {0};
\node[](2) at (0,-0.25) {2};
\end{tikzpicture}
\begin{tikzpicture}[scale=1.5]
\drawaxes
\fill[green](0,0)--(1,0)--(0,-1);
\fill[yellow](0,0)--(0,-1)--(-1,0);
\fill[green](0,0)--(0,1)--(-1,0);
\fill[yellow](0,1)--(0,0)--(1,0);
\path [draw,line width =0.75pt] (0,0)--(1,0);
\path [draw,line width =0.75pt] (0,0)--(0,1);
\path [draw,line width =0.75pt] (0,0)--(-1,0);
\path [draw,line width =0.75pt] (0,0)--(0,-1);
\path [draw,line width =0.75pt] (1,0)--(0,-1)--(-1,0)--(0,1)--(1,0);
\node[](1) at (0.3,0.3) {0};
\node[](4) at (0.3,-0.3) {3};
\node[](3) at (-0.3,-0.3) {2};
\node[](2) at (-0.3,0.3) {1};
\end{tikzpicture}
\begin{tikzpicture}[scale=1.5]
\drawaxes
\fill[yellow](0,0)--(1,0)--(1,1);
\fill[green](0,0)--(0,1)--(1,1);
\fill[yellow](0,0)--(0,1)--(-1,0);
\fill[green](0,0)--(-1,0)--(-1,-1);
\fill[yellow](0,0)--(-1,-1)--(0,-1);
\fill[green](0,0)--(0,-1)--(1,0);
\path [draw,line width =0.75pt] (0,0)--(1,0);
\path [draw,line width =0.75pt] (0,0)--(1,1);
\path [draw,line width =0.75pt] (0,0)--(0,1);
\path [draw,line width =0.75pt] (0,0)--(-1,0);
\path [draw,line width =0.75pt] (0,0)--(-1,-1);
\path [draw,line width =0.75pt] (0,0)--(0,-1);
\path [draw,line width =0.75pt] (1,0)--(0,-1)--(-1,-1)--(-1,0)--(0,1)--(1,1)--(1,0);
\node[](1) at (0.7,0.3) {0};
\node[](6) at (0.3,0.7) {1};
\node[](5) at (-0.3,0.3) {2};
\node[](4) at (-0.7,-0.3) {3};
\node[](3) at (-0.3,-0.7) {4};
\node[](2) at (0.3,-0.3) {5};
\end{tikzpicture}
\caption{Maps of order 3,4,6}\label{F:lin}
\end{figure}
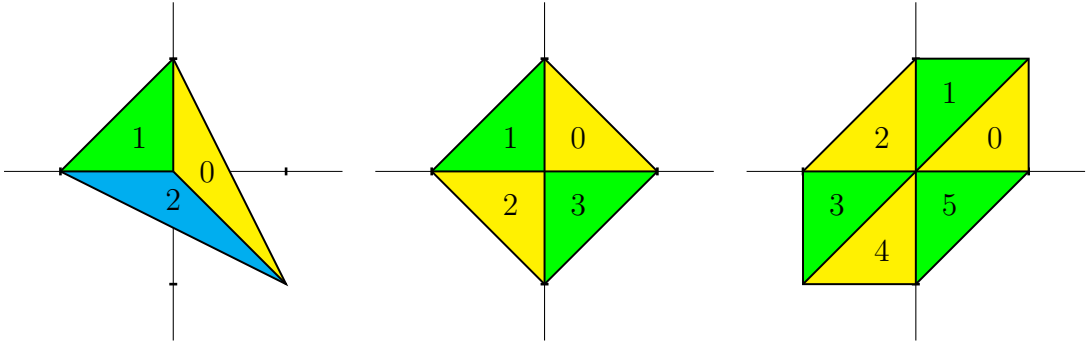

Brown's order 9 map,
  \begin{equation}\label{E:Bmap}
 H(x,y)= (\vert x\vert - y,x)
 \end{equation}
  is composed of two parts, one using the order 3 map $\alpha$ and the other using the order 6 map $\gamma$. It can be written explicitly as follows:
\[
H
\left(\begin{smallmatrix}x\\y\end{smallmatrix}\right) =
\begin{cases}
 \gamma \left(\begin{smallmatrix}x\\y\end{smallmatrix}\right) &:\ \text{if }x\ge0,  \\
 \alpha \left(\begin{smallmatrix}x\\y\end{smallmatrix}\right) &:\ \text{if }x<0.
\end{cases}
\]
The picture for Brown's map is shown in  Figure \ref{F:MB}. The periodicity of the map is established simply by verifying that the vertices of the polygon are mapped to each other in the fashion indicated in the figure. This is Brown's geometric proof.


\begin{figure}[h!]

\begin{tikzpicture}[scale=2.,line cap=round,line join=round,>=triangle 45,x=1.0cm,y=1.0cm]


\draw[color=black] (-2.5,0) -- (2.5,0);
\foreach \x in {-2,-1,1,2}
\draw[shift={(\x,0)},color=black] (0pt,2pt) -- (0pt,-2pt);
\draw[color=black] (0,-1.1) -- (0,2.1);
\foreach \y in {-1,1,2}
\draw[shift={(0,\y)},color=black] (2pt,0pt) -- (-2pt,0pt);
\draw[color=black] (0pt,-10pt);

\fill[yellow](0,0)--(2,1)--(1,0);
\fill[yellow](0,0)--(1,2)--(1,1);
\fill[yellow](0,0)--(0,1)--(-1,1);
\fill[yellow](0,0)--(-1,0)--(0,-1);
\fill[cyan,path fading =east](1,-1)--(0,0)--(1,0)--cycle;
\fill[green](0,0)--(1,1)--(2,1);
\fill[green](0,0)--(1,2)--(0,1);
\fill[green](0,0)--(-1,1)--(-1,0);
\fill[green](0,0)--(0,-1)--(1,-1);

\path [draw,line width =0.75pt] (0,0)--(-1,0);
\path [draw,line width =0.75pt] (0,0)--(0,-1);
\path [draw,line width =0.75pt] (0,0)--(1,-1);
\path [draw,line width =0.75pt] (0,0)--(1,0);
\path [draw,line width =0.75pt] (0,0)--(2,1);
\path [draw,line width =0.75pt] (0,0)--(1,1);
\path [draw,line width =0.75pt] (0,0)--(1,2);
\path [draw,line width =0.75pt] (0,0)--(0,1);
\path [draw,line width =0.75pt] (0,0)--(-1,1);
\path [draw,line width =0.75pt] (-1,0)--(0,-1)--(1,-1)--(1,0)--(2,1)--(1,1)--(1,2)--(0,1)--(-1,1)--(-1,0);

\begin{scriptsize}
\node[](1) at (1,0.25) {0};
\node[](2) at (0.75,1) {1};
\node[](3) at (-0.3,0.75) {2};
\node[](4) at (-0.3,-0.3) {3};
\node[](5) at (0.7,-0.25) {4};
\node[](6) at (1,0.7) {5};
\node[](7) at (0.25,0.9) {6};
\node[](8) at (-0.7,0.3) {7};
\node[](9) at (0.3,-0.7) {8};
\end{scriptsize}

\fill [black] (1,0) circle (1.5pt);
\fill [black] (2,1) circle (1.5pt);
\fill [black] (1,1) circle (1.5pt);
\fill [black] (1,2) circle (1.5pt);
\fill [black] (-1,1) circle (1.5pt);
\fill [black] (0,1) circle (1.5pt);
\fill [black] (0,-1) circle (1.5pt);
\fill [black] (-1,0) circle (1.5pt);
\fill [black] (1,-1) circle (1.5pt);
\end{tikzpicture}

\caption{Brown's map $H$}\label{F:MB}
\end{figure}
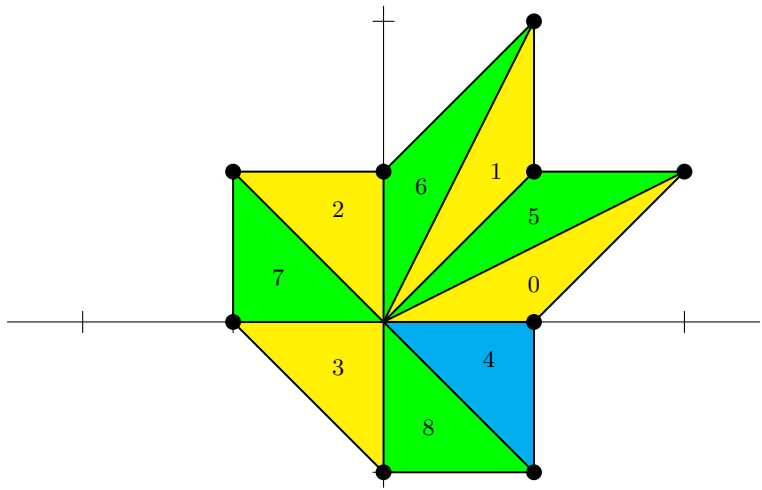

\section{Similar examples}

Brown's map combines the matrices $\alpha,\gamma$. Similar maps can be constructed by combining $\alpha,\beta$ and $\beta,\gamma$, as we now show.
First consider the map $G$ of the plane defined by
\[
G
\left(\begin{smallmatrix}x\\y\end{smallmatrix}\right) =
\begin{cases}
 \gamma \left(\begin{smallmatrix}x\\y\end{smallmatrix}\right) &:\ \text{if }x\ge0,  \\
 \beta\left(\begin{smallmatrix}x\\y\end{smallmatrix}\right) &:\ \text{if }x<0.
\end{cases}
\]
The map $G$ has order 5; the corresponding picture is shown in Figure \ref{F:o5}. Like Brown's map, $G$ can be written succinctly using the absolute value function: $G:(x,y)\mapsto (\frac{|x|+x}2-y,x)$.
And in the same way that Brown's map encodes the recurrence relation $h_{n+1}=|h_n|-h_{n-1}$, the map $G$ has a related recurrence relation $g_{n+1}=\frac{|g_n|+g_n}2-g_{n-1}$.

Now consider the map $F$  defined by
\[
F
\left(\begin{smallmatrix}x\\y\end{smallmatrix}\right) =
\begin{cases}
 \beta \left(\begin{smallmatrix}x\\y\end{smallmatrix}\right) &:\ \text{if }x\ge0,  \\
 \alpha\left(\begin{smallmatrix}x\\y\end{smallmatrix}\right) &:\ \text{if }x<0.
\end{cases}
\]
The map $F$ has order 7; the corresponding picture is shown in Figure \ref{F:o7}.  One has $F:(x,y)\mapsto (\frac{|x|-x}2-y,x)$, and $F$ has a related recurrence relation $f_{n+1}=\frac{|f_n|-f_n}2-f_{n-1}$.

\begin{figure}[h!]

\begin{minipage}{.4\textwidth}
  \centering
\begin{tikzpicture}[scale=2.]
\draw[color=black,line width =0.25pt] (-1.5,0) -- (1.5,0);
\draw[color=black] (0,-1.5) -- (0,1.5);
\draw[color=black,line width =0.25pt] (0pt,-10pt);

\fill[green](0,0)--(0,-1)--(-1,0);
\fill[yellow](0,0)--(0,1)--(-1,0);
\fill[yellow](0,0)--(1,1)--(1,0);
\fill[green](0,0)--(0,1)--(1,1);
\fill[cyan,path fading =west](0,-1)--(0,0)--(1,0)--cycle;

\path [draw,line width =0.75pt] (0,0)--(-1,0);
\path [draw,line width =0.75pt] (0,0)--(0,-1);
\path [draw,line width =0.75pt] (0,0)--(1,0);
\path [draw,line width =0.75pt] (0,0)--(1,1);
\path [draw,line width =0.75pt] (0,0)--(0,1);
\path [draw,line width =0.75pt] (-1,0)--(0,-1)--(1,0)--(1,1)--(0,1)--(-1,0);
\node[](1) at (0.7,0.3) {0};
\node[](2) at (0.3,0.7) {1};
\node[](3) at (-0.3,0.3) {2};
\node[](4) at (-0.3,-0.3) {3};
\node[](5) at (0.3,-0.3) {4};

\fill [black] (-1,0) circle (1.5pt);
\fill [black] (0,-1) circle (1.5pt);
 \fill [black] (1,0) circle (1.5pt);
\fill [black] (1,1) circle (1.5pt);
\fill [black] (0,1) circle (1.5pt);
\end{tikzpicture}
\caption{The map $G$}\label{F:o5}
\end{minipage}
\hfill
\begin{minipage}{.4\textwidth}
  \centering
\begin{tikzpicture}[scale=2.]
\draw[color=black,line width =0.25pt] (-1.5,0) -- (1.5,0);
\draw[color=black] (0,-1.5) -- (0,1.5);

\fill[yellow](0,0)--(1,0)--(1,1);
\fill[green](0,0)--(0,1)--(1,1);
\fill[yellow](0,0)--(0,1)--(-1,1);
\fill[green](0,0)--(-1,1)--(-1,0);
\fill[yellow](0,0)--(0,-1)--(-1,0);
\fill[green](0,0)--(1,-1)--(0,-1);
\fill[cyan,path fading =east](1,-1)--(0,0)--(1,0)--cycle;

\path [draw,line width =0.75pt] (0,0)--(1,0);
\path [draw,line width =0.75pt] (0,0)--(1,1);
\path [draw,line width =0.75pt] (0,0)--(0,1);
\path [draw,line width =0.75pt] (0,0)--(-1,1);
\path [draw,line width =0.75pt] (0,0)--(-1,0);
\path [draw,line width =0.75pt] (0,0)--(0,-1);
\path [draw,line width =0.75pt] (0,0)--(1,-1);
\path [draw,line width =0.75pt] (1,0)--(1,1)--(0,1)--(-1,1)--(-1,0)--(0,-1)--(1,-1)--(1,0);
\node[](1) at (0.7,0.3) {0};
\node[](2) at (-0.3,0.7) {1};
\node[](3) at (-0.3,-0.3) {2};
\node[](4) at (0.7,-0.3) {3};
\node[](5) at (0.3,0.7) {4};
\node[](6) at (-0.7,0.3) {5};
\node[](7) at (0.3,-0.7) {6};

\fill [black] (1,0) circle (1.5pt);
\fill [black] (1,1) circle (1.5pt);
 \fill [black] (0,1) circle (1.5pt);
\fill [black] (-1,1) circle (1.5pt);
\fill [black] (-1,0) circle (1.5pt);
\fill [black] (0,-1) circle (1.5pt);
\fill [black] (1,-1) circle (1.5pt);

\end{tikzpicture}
\caption{The map $F$}\label{F:o7}
\end{minipage}
\end{figure}

The maps we have seen so far have used the matrices $\alpha,\beta,\gamma$, which are all elliptic matrices; that is, their trace has absolute value $<2$. Some investigation reveals other piecewise linear maps involving parabolic matrices (i.e., those whose trace has absolute value 2) or hyperbolic matrices (i.e., those whose trace has absolute value $>2$). Let us introduce the matrices
\[
\mu = \begin{pmatrix}
	  -2   & -1\\
	  1   & 0
	 \end{pmatrix},\qquad
\nu = \begin{pmatrix}
	  -3   & -1\\
	  1   & 0
	 \end{pmatrix}.
\]
Now consider the maps $E,D$  defined by
\[
E
\left(\begin{smallmatrix}x\\y\end{smallmatrix}\right) =
\begin{cases}
 \alpha \left(\begin{smallmatrix}x\\y\end{smallmatrix}\right) &:\ \text{if }x\ge0,  \\
 \mu\left(\begin{smallmatrix}x\\y\end{smallmatrix}\right) &:\ \text{if }x<0,
\end{cases}\qquad
D
\left(\begin{smallmatrix}x\\y\end{smallmatrix}\right) =
\begin{cases}
 \alpha \left(\begin{smallmatrix}x\\y\end{smallmatrix}\right) &:\ \text{if }x\ge0,  \\
 \nu\left(\begin{smallmatrix}x\\y\end{smallmatrix}\right) &:\ \text{if }x<0.
\end{cases}
\]
The map $E$ has order 8; the corresponding picture is shown in Figure \ref{F:mu}.
The map $D$ has order 12; the corresponding picture is shown in Figure \ref{F:nu}.
Of course, these maps can also be expressed in formulas using the absolute value function, and they also encode second order recurrence relations.

\begin{figure}[h!]
\begin{tikzpicture}[scale=2.]
\draw[color=black,line width =0.25pt] (-1.5,0) -- (1.5,0);
\draw[color=black] (0,-1.5) -- (0,1.5);
\draw[color=black,line width =0.25pt] (0pt,-10pt);

\fill[yellow](0,0)--(0,1)--(1,0);
\fill[green](0,0)--(-1,1)--(-1,0);
\fill[yellow](0,0)--(1,-1)--(2,-1);
\fill[green](0,0)--(-1,2)--(0,1);
\fill[yellow](0,0)--(-1,0)--(0,-1);
\fill[green](0,0)--(2,-1)--(1,0);
\fill[yellow](0,0)--(-1,2)--(-1,1);
\fill[green](0,0)--(0,-1)--(1,-1);

\path [draw,line width =0.75pt] (0,0)--(1,0);
\path [draw,line width =0.75pt] (0,0)--(-1,0);
\path [draw,line width =0.75pt] (0,0)--(2,-1);
\path [draw,line width =0.75pt] (0,0)--(-1,2);
\path [draw,line width =0.75pt] (0,0)--(0,-1);
\path [draw,line width =0.75pt] (0,0)--(0,1);
\path [draw,line width =0.75pt] (0,0)--(-1,1);
\path [draw,line width =0.75pt] (0,0)--(1,-1);
\path [draw,line width =0.75pt] (2,-1)--(-1,2)--(-1,0)--(0,-1)--(2,-1);
\node[](1) at (0.3,0.3) {0};
\node[](2) at (-0.7,0.3) {1};
\node[](3) at (1,-0.7) {2};
\node[](4) at (-0.3,1) {3};
\node[](5) at (-0.3,-0.3) {4};
\node[](6) at (1,-0.3) {5};
\node[](7) at (-0.7,1) {6};
\node[](8) at (0.3,-.7) {7};

\fill [black] (0,1) circle (1.5pt);
\fill [black] (-1,0) circle (1.5pt);
 \fill [black] (2,-1) circle (1.5pt);
\fill [black] (-1,2) circle (1.5pt);
\fill [black] (0,-1) circle (1.5pt);
\fill [black] (1,0) circle (1.5pt);
\fill [black] (-1,1) circle (1.5pt);
\fill [black] (1,-1) circle (1.5pt);
\end{tikzpicture}
\caption{The map $E$}\label{F:mu}
\end{figure}
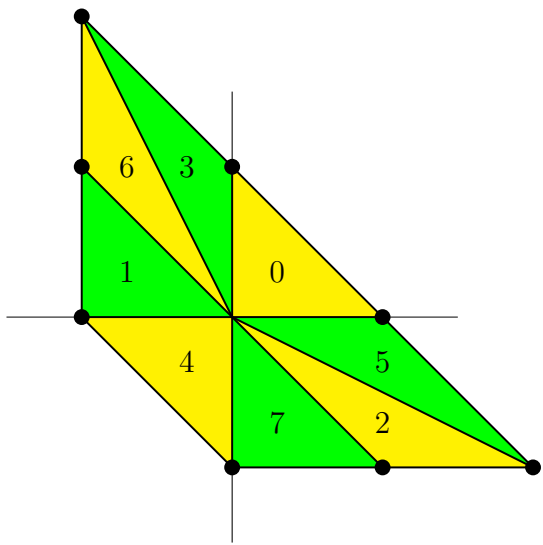

\begin{figure}[h!]
\begin{tikzpicture}[scale=2.]
\draw[color=black,line width =0.25pt] (-1.5,0) -- (1.5,0);
\draw[color=black] (0,-1.5) -- (0,1.5);
\draw[color=black,line width =0.25pt] (0pt,-10pt);

\fill[yellow](0,0)--(0,1)--(1,0);
\fill[green](0,0)--(-1,3)--(0,1);
\fill[yellow](0,0)--(-1,3)--(-1,2);
\fill[green](0,0)--(-1,2)--(-2,3);
\fill[yellow](0,0)--(-2,3)--(-1,1);
\fill[green](0,0)--(-1,1)--(-1,0);
\fill[yellow](0,0)--(-1,0)--(0,-1);
\fill[green](0,0)--(0,-1)--(1,-1);
\fill[yellow](0,0)--(1,-1)--(3,-2);
\fill[green](0,0)--(3,-2)--(2,-1);
\fill[yellow](0,0)--(3,-1)--(2,-1);
\fill[green](0,0)--(3,-1)--(1,0);

\path [draw,line width =0.75pt] (0,0)--(1,0);
\path [draw,line width =0.75pt] (0,0)--(-1,0);
\path [draw,line width =0.75pt] (0,0)--(3,-1);
\path [draw,line width =0.75pt] (0,0)--(-2,3);
\path [draw,line width =0.75pt] (0,0)--(3,-2);
\path [draw,line width =0.75pt] (0,0)--(-1,3);
\path [draw,line width =0.75pt] (0,0)--(0,-1);
\path [draw,line width =0.75pt] (0,0)--(0,1);
\path [draw,line width =0.75pt] (0,0)--(-1,1);
\path [draw,line width =0.75pt] (0,0)--(2,-1);
\path [draw,line width =0.75pt] (0,0)--(-1,2);
\path [draw,line width =0.75pt] (0,0)--(1,-1);
\path [draw,line width =0.75pt] (3,-1)--(1,0)--(0,1)--(-1,3)--(-1,2)--(-2,3)--(-1,1)--(-1,0)--(0,-1)--(1,-1)--(3,-2)--(2,-1)--(3,-1)--(1,0);
\node[](1) at (0.3,0.3) {0};
\node[](2) at (-0.7,0.3) {1};
\node[](3) at (2,-0.8) {2};
\node[](4) at (-1,1.7) {3};
\node[](5) at (1.25,-1) {4};
\node[](6) at (-0.2,1) {5};
\node[](7) at (-0.3,-.3) {6};
\node[](8) at (1.1,-0.2) {7};
\node[](9) at (-1,1.3) {8};
\node[](10) at (1.75,-1) {9};
\node[](11) at (-.75,1.8) {10};
\node[](11) at (.3,-.7) {11};

\fill [black] (0,1) circle (1.5pt);
\fill [black] (-1,0) circle (1.5pt);
 \fill [black] (3,-1) circle (1.5pt);
\fill [black] (-2,3) circle (1.5pt);
\fill [black] (3,-2) circle (1.5pt);
\fill [black] (-1,3) circle (1.5pt);
\fill [black] (0,-1) circle (1.5pt);
\fill [black] (1,0) circle (1.5pt);
\fill [black] (-1,1) circle (1.5pt);
\fill [black] (2,-1) circle (1.5pt);
\fill [black] (-1,2) circle (1.5pt);
\fill [black] (1,-1) circle (1.5pt);
\end{tikzpicture}
\caption{The map $D$}\label{F:nu}
\end{figure}
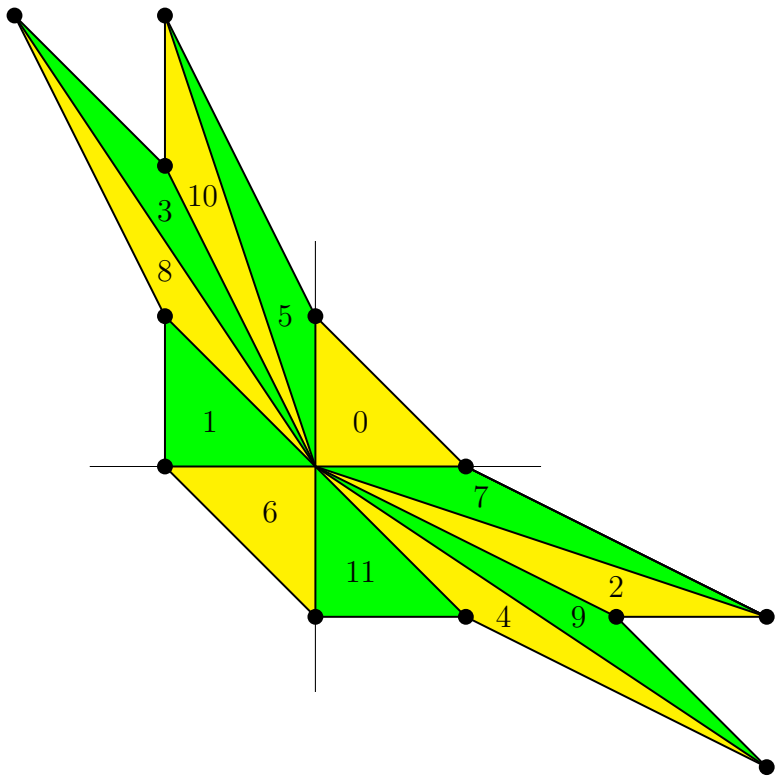

\section{Maps with half-plane piecewise linear pieces}

The aim of this section is to show that the maps we have seen so far effectively exhaust all the possible immediate generalisations of Brown's map.

\begin{theorem}\label{T:halfplanes}
Suppose that $f$ is a periodic continuous map of the plane that is linear with integer coefficients in each half plane $x\geq0$ and $x<0$. Then $f$ has period $1,2,3,4,5,6,7,8,9$ or $12$.\end{theorem}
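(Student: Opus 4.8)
The plan is to strip $f$ down to a two–parameter normal form and then transfer the whole question to the circle of directions, where periodicity becomes a statement about a piecewise-M\"obius circle homeomorphism. First I would record the constraints forced by the hypotheses. Write $A,B$ for the integer matrices giving $f$ on $\{x\ge0\}$ and $\{x<0\}$. Continuity along the $y$-axis forces $A\binom01=B\binom01$, so $A$ and $B$ share their second column $\binom cd\neq 0$. Since $f$ is a periodic homeomorphism fixing the origin it is orientation preserving or reversing; a periodic orientation-reversing homeomorphism of the plane is conjugate to a reflection and hence has period $2$, which is on the list, so I may assume $f$ is orientation preserving. For a generic point there is a small open set whose period-$n$ orbit stays in the interiors and follows a fixed itinerary, so $f^n$ agrees on it with a fixed product of $n$ matrices from $\{A,B\}$ equal to $\id$; taking determinants gives a product of positive integers equal to $1$, whence $\det A=\det B=1$.

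Next I would pass to a normal form. Conjugating $f$ by a lower-triangular real matrix with positive $(1,1)$-entry preserves the partition into the two half planes and the period, and can be chosen to send the common second column $\binom cd$ to $\binom{-1}0$; exactly as in the computation of the crystallographic examples this puts $A,B$ into companion form, so that $f=f_{t,s}$ is given by $(x,y)\mapsto(tx-y,x)$ on $\{x\ge0\}$ and $(x,y)\mapsto(sx-y,x)$ on $\{x<0\}$, with $t=\tr A$ and $s=\tr B$ integers---the only arithmetic input that survives, trace being conjugation-invariant. Conjugation by $-\id$ swaps the half planes and shows that $f_{t,s}$ and $f_{s,t}$ have the same period, so I may assume $t\ge s$. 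It now suffices to determine, for each $(t,s)\in\Z^2$, whether $f_{t,s}$ is periodic and with what period.

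Then I would descend to directions. Because $f_{t,s}$ commutes with positive scaling it induces an orientation-preserving homeomorphism $\bar f$ of the circle of rays, acting by the projective transformation of $A$ on the right semicircle and of $B$ on the left, agreeing at the two seam rays $\pm y$. Using $\det=1$ one checks that $f_{t,s}$ is periodic iff $\bar f$ is, with equal period: if $\bar f^{m}=\id$ then the seam has a finite orbit, $f_{t,s}^{m}$ is linear on each sector of the resulting finite fan and fixes every ray, hence is a positive scalar on each sector, hence (determinant $1$) the identity. The problem thus becomes: for which integer pairs is $\bar f$ conjugate to a finite-order rotation? I would control this through its rotation number $\rho(t,s)$, which is continuous and strictly monotonic in each of $t,s$ (increasing a trace decreases the turning of the corresponding M\"obius map), and pinned at the elliptic corner $t,s\in\{-1,0,1\}$ by the values reproducing the orders $3,4,5,6,7,9$ of $\alpha,\beta,\gamma$ and their combinations.

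The hard part will be the periodicity criterion beyond rationality of $\rho$: a rational rotation number guarantees one periodic ray but not that every ray is periodic, and a single nontrivial hyperbolic holonomy around the seam cycle destroys periodicity while leaving $\rho$ rational. I would therefore have to show both directions of a uniform dichotomy. On one hand, that the borderline examples $E=f_{-1,-2}$ and $D=f_{-1,-3}$, built from the parabolic $\mu$ and the hyperbolic $\nu$, are genuinely periodic because the product of matrices around the full seam cycle collapses to $\id$ even though $\mu,\nu$ are not periodic. On the other hand, that for all remaining pairs the map fails to be periodic: large positive traces create an attracting eigendirection inside an open sector and hence an infinite seam orbit, while excursions such as $s\le -4$ or the unmatched parabolic pairs force either period exceeding $12$ or a nontrivial return map. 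Establishing this dichotomy---thereby confining the periodic pairs to a finite list and ruling out periods $10$, $11$ and everything above $12$---is the crux, and I expect it to consume the bulk of the work, with the parabolic and hyperbolic cases $E,D$ the most delicate since there the individual matrices are not periodic at all.
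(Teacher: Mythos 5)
Your opening reduction is essentially the paper's: continuity along the $y$-axis forces a common second column, orientation reversal gives period $2$ outright, the open-sector/itinerary argument gives $\det A=\det B=1$, and a half-plane-preserving conjugation puts both matrices in companion form $\left(\begin{smallmatrix}t&-1\\1&0\end{smallmatrix}\right)$, $\left(\begin{smallmatrix}s&-1\\1&0\end{smallmatrix}\right)$ with $t\ge s$ after conjugating by $-\id$. (Two small points there: your lower-triangular conjugation only works when the common second column has nonzero first entry, and the degenerate case where the $y$-axis is invariant has to be dispatched separately; and the determinant argument needs the generic orbit to actually visit both half planes.) The problem, however, is that everything after the normalization is a plan rather than a proof: the theorem \emph{is} the determination of which pairs $(t,s)$ give periodic maps, and that is precisely the part you label ``the crux'' and leave undone. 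The paper carries it out concretely: no positive eigenvalues (from the rotation-number remark) forces $t,s\le 1$; an induction on the ratios $x_{m+1}/x_m$ along the orbit of $(1,0)^t$ shows the ratios stay below $-1$ forever when both traces are $\le -2$, forcing $t\ge -1$; for the mixed cases $t\in\{0,1\}$, $s\le -2$ and for $t=-1$, $s\le -4$, one follows an eigenvector of $A_2$ (resp.\ of $A_2A_1$) around one full turn and finds it returns scaled by a factor of modulus $\ne 1$, with the borderline parabolic values killed by one explicit orbit computation; the surviving pairs are exactly $H,G,F,E,D$ and the linear maps.

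I would also caution that the route you sketch for the crux is unlikely to close the gap even if pursued. Rotation numbers of monotone one-parameter families of circle homeomorphisms are monotone but generically \emph{not} strictly monotone (phase locking), so ``strictly monotonic in each of $t,s$'' is not available; and, as you yourself note, rationality of the rotation number does not imply periodicity. No function of $\rho$ alone can separate the periodic pair $(t,s)=(-1,-3)$ from the non-periodic $(-1,-4)$: what decides the matter is whether the holonomy around the seam cycle is the identity, and verifying or refuting that is exactly the matrix/eigenvector computation the paper performs and your proposal omits. So the setup is sound and matches the paper, but the proof of the period list $\{1,2,3,4,5,6,7,8,9,12\}$ --- in particular the exclusion of all other periods --- is missing.
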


Before presenting the proof of this theorem, we make a remark about general piecewise linear maps, not just maps that are linear in each half plane. We will use this remark in the proof of the theorem, and again later in the paper.

\begin{remark}\label{R:invol} Suppose that $f$ is a  piecewise linear map of the plane with period $n$ defined by matrices $A_1,\dots,A_k$ on respective open cones $C_1,\dots,C_k$. So the complement of the union of these cones is a union of closed rays $R_1,\dots , R_k$ say.
Consider the union $R$ of the images of these rays:
\[
R=\bigcup_{i=1}^k\bigcup_{j=1}^n f^j(R_i).
\]
 The set $R$ is a finite union of the rays, and the restriction of each power of $f$ to each connected component of the complement to $R$ is a linear map, which is a certain product of the $A_i$ (which may depend on the component). As these components are open sets, it follows that the corresponding matrix products for $f^n$ are all identity matrices. In particular, since the  $A_i$ are integer matrices, $\det A_i=\pm 1$ for all $i$. If $f$ is orientation preserving, then $\det A_i=1$ for each $i$, while if $f$ is orientation reversing, $\det A_i=-1$  for each $i$.

 Note that via the central projection, $f$ induces (and is determined by) a periodic homeomorphism of the unit circle. It follows from well known results (see \cite{CK}) that if $f$ is orientation reversing, then $f^2=\id$, while if  $f$ is orientation preserving, then $f$ is topologically conjugate to a rotation by the angle of $2\pi k /n$, for some $k$ and $n$ are coprime. There are infinitely many orientation reversing maps of period two. For example, consider the map defined by the matrix $A_1$ (resp.~$A_2$) in the right (resp.~left) half plane, where
\[
 A_1 =\left(\begin{smallmatrix}1&0\\0&-1\end{smallmatrix}\right) ,  \qquad
 A_2=\left(\begin{smallmatrix}1&0\\n&-1\end{smallmatrix}\right) .
\]
In the orientation preserving case,  the number $k/n$ is called the \emph{rotation number} of $f$. Such an $f$ has no fixed points on the unit circle.
In particular, none of the corresponding matrices $A_i$ has a positive eigenvalue.
 \end{remark}

\begin{proof}[Proof of Theorem \ref{T:halfplanes}] Suppose that $f$ is a piecewise linear map of period $n$ defined by two integer matrices $A_1, A_2$ say, with
\[
f
\left(\begin{smallmatrix}x\\y\end{smallmatrix}\right) =
\begin{cases}
 A_1 \left(\begin{smallmatrix}x\\y\end{smallmatrix}\right) &:\ \text{if }x\ge0,  \\
 A_2\left(\begin{smallmatrix}x\\y\end{smallmatrix}\right) &:\ \text{if }x<0.
\end{cases}
\]
From the above remark, we may assume that $\det A_1= \det A_2=1$.
The continuity of $f$ means that $A_1\left(\begin{smallmatrix}0\\1\end{smallmatrix}\right) =A_2\left(\begin{smallmatrix}0\\1\end{smallmatrix}\right)$; that is, $A_1,A_2$ have the same second column.
 By a global conjugation by a matrix of the form $\left(\begin{smallmatrix}1&0\\c&1\end{smallmatrix}\right)$ with $c\in\Z$, we may assume that
\[
 A_1 =\left(\begin{smallmatrix}a&-1\\1&0\end{smallmatrix}\right) ,  \qquad
 A_2=\left(\begin{smallmatrix}b&-1\\1&0\end{smallmatrix}\right),
\]
for some $a,b\in\Z$. From the above remark, neither $A_1$ nor $A_2$ has a positive eigenvalue.
So we have $a,b\leq 1$.  Furthermore, by a global conjugation by the matrix $-\id$ if necessary, we may assume that $a\geq b$. If $a=b$, then $f$ is itself linear, in which case its order is $1,2,3,4$ or $6$ by the crystallographic restriction. So we may assume that $1\geq a>b$.

If $b\geq -1$, there are only three possibilities:
\begin{enumerate}
\item $(a,b)=(1,-1)$, in which case $A_1=\gamma,A_2=\alpha$ and $f=H$.
\item $(a,b)=(1,0)$, in which case $A_1=\gamma,A_2=\beta$ and $f=G$.
\item $(a,b)=(0,-1)$, in which case $A_1=\beta,A_2=\alpha$ and $f=F$.
\end{enumerate}

We may assume that $a\geq -1$.
To see this, suppose that $a\leq -2$, so that $a,b\leq -2$. Consider the sequence $x_1,x_2,\dots$ defined by $(x_{m+1} ,x_m)^t= f^m (1, 0)^t$ for all $m\in\N$.
We will show by induction that $x_{m+1} /x_m <-1$ for all $m$. We have $x_{2} /x_1 =a<-1$. If $x_{m+1} /x_m <-1$, then
\[
\frac{x_{m+2}}{x_{m+1}}= \frac{c x_{m+1}- x_{m}}{x_{m+1}}=c-\frac{ x_{m}}{x_{m+1}}<c+1\leq -1,
\]
where $c=a$ or $b$, depending of $m$. But if $x_{m+1} /x_m <-1$ for all $m$, then $f$ cannot be periodic. So $a\in\{ -1,0,1\}$.

Note that there are no cases with $a=0$ and $b< -2$.
Indeed, suppose that $a=0$ and $b<  -2$.
The matrix $A_2$ has eigenvalues  $\lambda_{\pm}=\frac{b\pm\sqrt{b^2-4}}2$  and corresponding eigenvectors $X_{\pm}=(\lambda_{\pm},1)^t$, which lie in the second quadrant.
So $f(X_{\pm})=A_2X_{\pm}$ is in the fourth quadrant. Then $f^3(X_{\pm})=A_1^2A_2(X_{\pm})=-\lambda_{\pm} X_{\pm}$, since $A_1=\beta$, which is a rotation through $\pi/2$. But this is impossible as $|\lambda_{\pm}|\not=1$ and $f$ is periodic. When $a=0$ and $b= -2$, the matrix $A_2$ has the eigenvector $X=(-1,1)^t$ with eigenvalue $-1$, and the same argument shows that $f^3(X)=A_1^2A_2(X)=X$. So if $f$ is periodic it must have period 3. But  for example, $f^3(0,-1)^t=(-1,0)^t$. So the cases $a=0$ and $b< -1$ cannot appear.

Similarly, there are no cases with $a=1$ and $b< -2$. Indeed, suppose that $a=1$ and $b<  -2$ and let $X_{\pm}$ be as in the previous paragraph.
So as before, $f(X_{\pm})=A_2X_{\pm}$ is in the fourth quadrant. Then $f^4(X_{\pm})=A_1^3A_2(X_{\pm})=-\lambda_{\pm} X_{\pm}$, since $A_1=\gamma$ and $\gamma^3=-\id$  (see the right hand diagram in Figure \ref{F:lin}). But once again, this is impossible as $|\lambda_{\pm}|\not=1$ and $f$ is periodic. When $a=1$ and $b= -2$, the matrix $A_2$ has the eigenvector $X=(-1,1)^t$ and the same argument shows that $f^4(X)=A_1^3A_2(X)=X$. So if $f$ is periodic it must have period 4. But for example, $f^4(0,-1)^t=(-1,0)^t$. So the cases $a=1$ and $b< -1$ cannot appear.

When $a=-1,b=-2$, we have $f=E$ and when $a=-1,b=-3$, we have $f=D$, where $E$ and $D$ are the periodic maps defined at the end of the previous section.

It remains to show that the cases $a=-1,b<-3$ cannot occur.
Suppose that $a=-1,b\leq -4$ and let $\lambda= \frac{-b-2\pm\sqrt{b^2+4b}}2$ and consider the vector $X=(1,\frac{-1}{\lambda+1})$. Note that $\lambda>0$ and so $\frac{1}{\lambda+1}<1$. It follows that $f(X)=A_1(X)$ is in the second quadrant. So $f^2(X)=A_2A_1(X)$. But
\[
A_2A_1=\begin{pmatrix}-b-1&-b\\ -1&-1\end{pmatrix}.
\]
and $\lambda$ has been chosen so that it is an eigenvalue of $A_2A_1$ with eigenvector $X$. So $f^2(X)=\lambda X$. When $b<-4$, we have $\lambda>1$, and so $f$ cannot be periodic. When $b=-4$, we have $\lambda=1$ and so $f$ can only have periodic two. But $f^2(1,0)^t=A_2A_1(1,0)^t=(3,-1)^t$, so $b=-4$ is also impossible.
\end{proof}

\begin{remark}
Brown's map $H$ and the map $F$ of order 7 each wrap twice around the origin, so they have rotation number $\frac29$ and $\frac27$ respectively, while $\alpha,\beta,G,\gamma$ have rotation numbers $\frac13,\frac14,\frac15,\frac16$ respectively, and the maps $E,D$ have rotation numbers $\frac38,\frac5{12}$ respectively. The $5^{th}$ iterate $H^5$ has rotation number $\frac19$. It takes region $i$ to region $i+5 \pmod 9$  in Figure \ref{F:MB}. Similarly, the maps $E^3$ and $D^5$ have rotation numbers $\frac18,\frac1{12}$ respectively.
In general, if a map $f$ has rotation number $\frac{k}n$, where $k,n$ are coprime, then $f^j$ has rotation number $\frac{1}n$, where $j$ is the multiplicative inverse of $k$ in $\Z_n$.
Notice that the maps $H^5,E^3,D^5$ are not like the maps considered in this section. They are not composed of maps that are linear on two half planes. Instead they are piecewise linear with 9,8,12 piecewise linear parts respectively.
\end{remark}

\section{The general situation}

When one allows piecewise linear maps with more than two pieces, periods other than the ones we saw above are possible. In fact, any period is possible, as we will show below.

Consider $n$ distinct nonzero vectors $e_0,\dots,e_{n-1}$ in the plane, with integer coordinates, arranged in anti-clockwise order, so that the angle between each pair of successive vectors is less than $\pi$. Let $C_i$ denote the closed cone between the rays $\R^+e_i$ and $\R^+e_{i+1}$, where here and below, the subscripts are taken modulo $n$. Suppose we have a map $f$ of the plane such that for each $i\in\{0,1,\dots,n-1\}$, the restriction $f_i:=f|_{C_i}$ is defined by a matrix $A_i\in SL(2,\Z)$. Suppose furthermore that $f_i(e_{i-1})=e_{i}$, $f_i(e_i)=e_{i+1}$  and $f_i(C_{i-1})=C_{i}$ for all $i$. So $f$ is periodic with order $n$ and rotation number $\frac1n$.
Notice that since the matrices $A_i$ have determinant 1, the maps $f_i$ are area preserving. Thus, for each $i$, we have $e_{i-1}\times e_i=f_i(e_{i-1})\times f_i(e_i)=e_{i}\times e_{i+1}$, where $\times$ denotes the vector cross product. Hence $e_i\times (e_{i-1}+e_{i+1})=0$, and so $e_{i-1}+e_{i+1} =m_i e_i$ for some  $m_i\in\R$. Note that relative to the basis $\{e_{i-1},e_i\}$, the map $f_i$ has matrix representation $\left(\begin{smallmatrix}0&-1\\
1&m_i
\end{smallmatrix}\right) $. So, as $f_i$ has integer trace, we have $m_i\in\Z$.

 The sequence $m_0,m_1,\dots,m_{n-1}$ encodes much of the nature of the map. For the maps considered above, with the vertices of the polygons taken as our vectors $e_i$,  the corresponding sequences are shown in Table \ref{Table}.
Note that using the sequence $m_0,m_1,\dots,m_{n-1}$, the vectors $e_i$ can all be determined from $e_0,e_1$ by the recurrence relation $e_{i-1}+e_{i+1} =m_i e_i$, and the maps $f_1$ are also determined as their matrix representation $\left(\begin{smallmatrix}0&-1\\
1&m_i
\end{smallmatrix}\right) $ relative to the basis $\{e_{i-1},e_i\}$. So, up to a global linear conjugacy, the map is completely determined by the sequence. Indeed, as we will explain below, we can fix $e_0=(1,0)^t$ and $e_1=(0,1)^t$, and then the map is determined by the sequence.

\begin{table}[h!]
\begin{tabular}{|c|c|c|}\hline
map/matrix&order&sequences\\\hline
$\alpha$&3&$-1,-1,-1$\\\hline
$\beta$&4&$0,0,0,0$\\\hline
$G$&5&$0,1,1,1,0$\\\hline
$\gamma$&6&$1,1,1,1,1,1$\\\hline
$F^4$&7& $1,2,1,1,1,1,2$\\\hline
$E^3$&8&$2,1,2,1,1,2,1,2$ \\\hline
$C$&8&$1,2,1,2,1,2,1,2$\\\hline
$H^5$&9& $1,3,1,3,1,1,1,1,1,3$\\\hline
$D^5$&12& $3,1,3,1,3,1,1,3,1,3,1,3$\\\hline
\end{tabular}
\medskip
\caption{Maps and their sequences}\label{Table}
\end{table}

Not all integer sequences appear as the sequence of some piecewise linear map. For $n=3$, there is essentially only one map. Indeed, for $e_0=(1,0)^t$ and $e_1=(0,1)^t$, we have $m_2e_2=e_0+e_1=(1,1)^t$ and then
\begin{align*}
(m_0,0)^t&=m_0e_0=e_1+e_2=\left(\frac1{m_2},1+\frac1{m_2}\right)^t, \\
\text{and}\ (0,m_1)^t&=m_1e_1=e_0+e_2=\left(1+\frac1{m_2},\frac1{m_2}\right)^t.
\end{align*}
It follows that $m_0=m_1=m_2=-1$. So the map is (conjugate to) the globally linear map defined by the matrix $\alpha$. Notice that in this case,
the sum $e_0+e_1+e_2$ is zero. This observation will be useful later on.

For each $n>3$, there are infinitely many piecewise linear maps of order $n$, even up to global linear conjugacy. For $n=4$, examples are furnished by the polygons with vertices $e_0=(1,0)^t,e_1=(0,1)^t,e_2=(-1,0)^t,e_3=(m,-1)^t$, and matrices
\[
A_1=\begin{pmatrix}
0&-1\\
1&0
\end{pmatrix},\quad  A_2=\begin{pmatrix}
-m&-1\\
1&0
\end{pmatrix},\quad  A_3=\begin{pmatrix}
-m&-m^2-1\\
1&m
\end{pmatrix},\quad  A_0=\begin{pmatrix}
0&-1\\
1&m
\end{pmatrix}.  \]
The corresponding sequence is $0,-m,0,m$. The case $m=0$ is  the globally linear map determined by the matrix $\beta$.
The polygons for the values $m=1,2,3$ are shown in Figure \ref{F:o4}.

\begin{figure}[h!]
\begin{tikzpicture}[scale=1.4]

\fill[green](0,0)--(1,0)--(1,-1);
\fill[yellow](0,0)--(1,-1)--(-1,0);
\fill[green](0,0)--(0,1)--(-1,0);
\fill[yellow](0,1)--(0,0)--(1,0)--cycle;

\path [draw,line width =0.75pt] (0,0)--(1,-1);
\path [draw,line width =0.75pt] (0,0)--(1,0);
\path [draw,line width =0.75pt] (0,0)--(-1,0);
\path [draw,line width =0.75pt] (0,0)--(0,1);
\path [draw,line width =0.75pt] (1,0)--(1,-1)--(-1,0)--(0,1)--(1,0);
\node[](1) at (0.3,0.3) {0};
\node[](2) at (-0.3,0.3) {1};
\node[](3) at (-0.2,-0.2) {2};
\node[](4) at (0.7,-0.3) {3};

\fill [black] (1,0) circle (1.5pt);
\fill [black] (1,-1) circle (1.5pt);
 \fill [black] (-1,0) circle (1.5pt);
\fill [black] (0,1) circle (1.5pt);

\draw[color=black,line width =0.25pt] (-1.5,0) -- (1.5,0);
\draw[color=black] (0,-1) -- (0,1.5);
\draw[color=black,line width =0.25pt] (0pt,-10pt);

\end{tikzpicture}
\begin{tikzpicture}[scale=1.4]

\fill[green](0,0)--(1,0)--(2,-1);
\fill[yellow](0,0)--(2,-1)--(-1,0);
\fill[green](0,0)--(0,1)--(-1,0);
\fill[yellow](0,1)--(0,0)--(1,0)--cycle;

\path [draw,line width =0.75pt] (0,0)--(2,-1);
\path [draw,line width =0.75pt] (0,0)--(1,0);
\path [draw,line width =0.75pt] (0,0)--(-1,0);
\path [draw,line width =0.75pt] (0,0)--(0,1);
\path [draw,line width =0.75pt] (1,0)--(2,-1)--(-1,0)--(0,1)--(1,0);
\node[](1) at (0.3,0.3) {0};
\node[](2) at (-0.3,0.3) {1};
\node[](3) at (-0.1,-0.15) {2};
\node[](4) at (.8,-0.2) {3};

\fill [black] (1,0) circle (1.5pt);
\fill [black] (2,-1) circle (1.5pt);
 \fill [black] (-1,0) circle (1.5pt);
\fill [black] (0,1) circle (1.5pt);

\draw[color=black,line width =0.25pt] (-1.5,0) -- (1.5,0);
\draw[color=black] (0,-1) -- (0,1.5);
\draw[color=black,line width =0.25pt] (0pt,-10pt);

\end{tikzpicture}
\begin{tikzpicture}[scale=1.4]

\fill[green](0,0)--(1,0)--(3,-1);
\fill[yellow](0,0)--(3,-1)--(-1,0);
\fill[green](0,0)--(0,1)--(-1,0);
\fill[yellow](0,1)--(0,0)--(1,0)--cycle;

\path [draw,line width =0.75pt] (0,0)--(3,-1);
\path [draw,line width =0.75pt] (0,0)--(1,0);
\path [draw,line width =0.75pt] (0,0)--(-1,0);
\path [draw,line width =0.75pt] (0,0)--(0,1);
\path [draw,line width =0.75pt] (1,0)--(3,-1)--(-1,0)--(0,1)--(1,0);
\node[](1) at (0.3,0.3) {0};
\node[](2) at (-0.3,0.3) {1};
\node[](3) at (0.08,-0.15) {2};
\node[](4) at (1,-0.2) {3};

\fill [black] (1,0) circle (1.5pt);
\fill [black] (3,-1) circle (1.5pt);
 \fill [black] (-1,0) circle (1.5pt);
\fill [black] (0,1) circle (1.5pt);

\draw[color=black,line width =0.25pt] (-1.5,0) -- (2.5,0);
\draw[color=black] (0,-1) -- (0,1.5);
\draw[color=black,line width =0.25pt] (0pt,-10pt);

\end{tikzpicture}
\caption{Maps with sequences $\{0,-1,0,1\}$,  $\{0,-2,0,2\}$ and $\{0,-3,0,3\}$ respectively}\label{F:o4}
\end{figure}
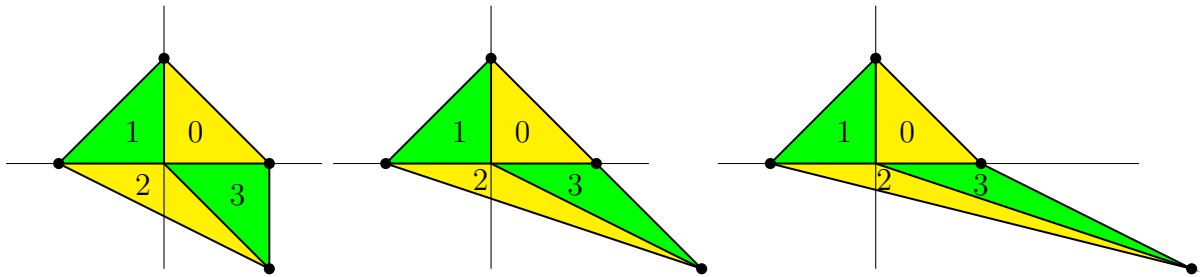

We will exhibit maps with $n>5$ below. But first we give a useful simplification, that will obviate the need to give explicit matrices $A_i$. The observation is two-fold. Firstly, notice that without loss of generality we may assume that the triangles $0,e_i,e_{i+1}$ all have area $\frac12$. Indeed, conjugating by an element of $SL(2,\Z)$ we may assume that $e_0=(x,0)^t$ and that  $e_1=(0,y)^t$ for some $x,y\in \N$. Since $e_1=A_1e_0$ and $A_1\in SL(2,\Z)$, we have $A_1=\left(\begin{smallmatrix}0&b\\
\frac{y}x&d
\end{smallmatrix}\right) $ for some $b,d \in \Z$, and so $y=x$ since $A$ has determinant $1$. Then using the recurrence relation $e_{i+1} =m_i e_i-e_{i-1}$, the coordinates of the vectors $e_i$ are all multiples of $x$. So we may replace the $e_i$ by $\frac1x e_{i}$. Then $e_0=(1,0)^t$ and $e_1=(0,1)^t$, so the triangle $0,e_0,e_1$ has area $\frac12$. Then as  the maps $f_i$ are area preserving, all the triangles $0,e_i,e_{i+1}$ have area $\frac12$.

\begin{definition}
Given a piecewise linear map $f$ of period $n$ and rotation number $\frac1n$, we will say that the polygon determined by the vertices $e_0,e_1,\dots,e_{n-1}$, as described above, is the \emph{fundamental polygon} of $f$.
\end{definition}

\begin{remark}
By Pick's theorem \cite{GS}, a closed planar triangle  $T$ having its vertices on the integer lattice has area $\frac12$ if and only if the only lattice
points in $T$ are its three vertices. So in a fundamental polygon of order $n$, the only lattice points are the origin and the $n$ vertices.
\end{remark}

The second observation, and this is the useful part, is that every star shaped polygon with  vertices $e_0,e_1,\dots,e_{n-1}$, listed counter-clockwise, with integer coordinates, for which the triangles $0,e_i,e_{i+1}$ all have area $\frac12$, occurs as the fundamental polygon of a piecewise linear map $f$. Indeed, we uniquely define $f$ by imposing the requirements that the linear restrictions $f_i$ satisfy $f_i(e_{i-1})=e_{i}$ and $f_i(e_i)=e_{i+1}$. This guarantees the continuity of $f$. It remains to see that the maps $f_i$ belong to $SL(2,\Z)$. Notice that the assumption that the triangles $0,e_i,e_{i+1}$ have area $\frac12$ means that the matrix $[e_i,e_{i+1}]$, whose columns are the vectors $e_i,e_{i+1}$, has determinant 1. So we can make a $SL(2,\Z)$ change of basis so that $e_i=(1,0)^t$ and $e_{i+1}=(0,1)^t$. Relative to this basis, $f_i$  has matrix representation $A_i=\left(\begin{smallmatrix}0&b_i\\
1&d_i
\end{smallmatrix}\right) $ for some $b_i,d_i\in\R$. But the second column of $A_i$ is the vector $A_ie_{i+1}$, which equals $e_{i+2}$, which has integer coordinates. So $b_i,d_i\in\Z$. The map is orientation preserving, so $b_i<0$. Finally, since $f^n=\id$, the product of the determinants of the $f_i$ is 1, so $b_i=-1$ for each $i$. So $A_i\in SL(2,\Z)$. (In the notation we used above, $d_i=m_{i+1}$).

The upshot of the above observations is that the map (and its sequence) is completely determined by its fundamental polygon, so to exhibit a piecewise linear map we just need to give its fundamental polygon. There is a simple construction that enables the construction of maps. Suppose one has a fundamental polygon with vertices $e_0,e_1,\dots,e_{n-1}$. Choose $i\in\{0,1,\dots,n-1\}$ and set $\hat e_i=e_i+e_{i+1}$. We have $\det[e_{i},\hat e_i]=\det[e_{i},e_i+e_{i+1}]=\det[e_{i},e_{i+1}]=1$, and similarly, $\det[\hat e_{i},e_{i+1}]=1$. So the vectors $e_0,e_1,\dots,e_i,\hat e_i,e_{i+1},\dots,e_{n-1}$ are the vertices of a fundamental polygon of a piecewise linear map with period $n+1$.
We will call this process \emph{vertex insertion}. Repeating employing vertex insertion, we can construct piecewise linear map of arbitrary order.

Let us now describe how vertex insertion can be reversed. Consider  a fundamental polygon $P$ with vertices $e_0,e_1,\dots,e_{n-1}$.  Suppose that for some $i\in\{0,1,\dots,n-1\}$, the vertex $e_i$ is a vertex of the convex hull of $P$ and furthermore, the angle at the origin formed by the vertices $e_{i-1},e_{i+1}$ is strictly less than $\pi$. So the vertices $0,e_{i-1},e_i,e_{i+1}$ form a convex quadrilateral $Q$.
It follows that $e_i=e_{i-1}+e_{i+1}$. To see this first note that as the triangle $0,e_{i-1},e_{i}$ has area $\frac12$, we can apply a $SL(2,\Z)$ transformation mapping $e_{i-1}$ to
$(1,0)^t$ and $e_{i}$ to $(0,1)^t$.
Then as the triangle $0,e_i,e_{i+1}$ has area $\frac12$, the vertex $e_{i+1}$ must have $x$-coordinate $-1$. Then as the quadrilateral $Q$ is convex, we have $e_{i+1}=(-1,1)^t$. So $e_i=e_{i-1}+e_{i+1}$ as claimed. Now remove vertex $e_i$. The triangle $0,e_{i-1},e_{i+1}$ has area $\frac12$. So the vertices $e_0,e_1,\dots,e_{i-1},e_{i+1},\dots,e_{n-1}$ form a fundamental polygon for a map of period $n-1$. We will call this process \emph{vertex removal}.

A little thought should convince the reader that starting with an given fundamental polygon, we can perform the vertex removal process repeatedly until we obtain a fundamental  polygon with just 4 vertices. We claim that for a fundamental  polygon $P$ with vertices $e_0,e_1,e_2,e_3$, at least two of the vectors are collinear; say $e_2=-e_0$. To see this note that if no two of them are collinear then we could make a further vertex removal and obtain the fundamental  polygon corresponding to the matrix $\alpha$ of order 3. But as we saw above, the sum of the vertex vectors for $\alpha$ is zero. It follows that when we re-construct $P$ by applying vertex insertion to $\alpha$, the inserted vertex is equal to minus the other vertex. So we may assume $e_2=-e_0$, as claimed.
Thus,  applying a conjugacy, we have the situation we considered above: $e_0=(1,0)^t,e_1=(0,1)^t,e_2=(-1,0)^t,e_3=(m,-1)^t$, and $m\geq 0$. Let us call the corresponding map $\varphi_m$. When $m=0$, the map $\varphi_0$ is just the map given by the matrix $\beta$; no further vertex removal is possible.
When $m=1$, we can make a further vertex removal by removing $e_0$, and again obtain the fundamental  polygon of $\alpha$.  When $m>1$,  no further vertex removal is possible.

\begin{theorem}\label{T:vi} Every piecewise linear map of the plane of order $n$ with rotation number $\frac1n$ is obtained by repeated vertex insertion from $\alpha,\beta$ or $\varphi_m$ for $m>1$.
\end{theorem}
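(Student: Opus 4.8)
The plan is to use that vertex insertion and vertex removal are mutually inverse: a map is obtained from another by vertex insertion exactly when the second is obtained from the first by vertex removal. Hence Theorem~\ref{T:vi} is equivalent to the statement that every fundamental polygon can be reduced, by finitely many vertex removals, to the fundamental polygon of $\alpha$, of $\beta$, or of some $\varphi_m$ with $m>1$. Almost all of the ingredients are already assembled before the theorem: the four-vertex analysis shows that a fundamental polygon with $n=4$ is, up to conjugacy, some $\varphi_m$, that $\varphi_0=\beta$ and the $\varphi_m$ with $m>1$ admit no further removal, and that $\varphi_1$ removes to $\alpha$. So everything reduces to the one assertion the text passes over, namely that \emph{every fundamental polygon with $n\ge5$ admits at least one vertex removal}: granting it, each removal lowers $n$ by one until $n=4$, where the four-vertex analysis finishes the job.

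To pin this down I would first restate removability arithmetically. Writing $e_{i-1}+e_{i+1}=m_ie_i$ as in the text, the sign of $m_i$ records the angle the neighbours of $e_i$ subtend at the origin: $\theta_{i-1}+\theta_i<\pi$ iff $m_i\ge1$, $\theta_{i-1}+\theta_i=\pi$ iff $m_i=0$ (i.e.\ $e_{i+1}=-e_{i-1}$), and $\theta_{i-1}+\theta_i>\pi$ iff $m_i\le-1$. The derivation in the vertex-removal paragraph shows that its two hypotheses—$e_i$ a convex-hull vertex and $\angle e_{i-1}0e_{i+1}<\pi$—force $e_i=e_{i-1}+e_{i+1}$, that is $m_i=1$; conversely $m_i=1$ makes $0,e_{i-1},e_i,e_{i+1}$ a fundamental parallelogram and so renders $e_i$ removable. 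Thus the assertion becomes purely combinatorial: \emph{if $n\ge5$ then some $m_i=1$.}

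I would attack this by an extremal principle and argue by contradiction, assuming $n\ge5$ but $m_i\ne1$ for all $i$. For a generic linear functional $\phi$, let $e_i$ be the unique vertex maximising $\langle\phi,e_i\rangle$; this maximum is positive because the origin lies in the interior of the convex hull of the vertices, and $e_i$ is then a hull vertex with $\langle\phi,e_{i-1}\rangle,\langle\phi,e_{i+1}\rangle<\langle\phi,e_i\rangle$. Substituting into $e_{i-1}+e_{i+1}=m_ie_i$ gives $m_i<2$, so $m_i\le1$, and hence $m_i\le0$ under our assumption. Letting $\phi$ vary shows \emph{every} convex-hull vertex has $m_i\le0$, i.e.\ $\theta_{i-1}+\theta_i\ge\pi$; since $\sum_i(\theta_{i-1}+\theta_i)=2\sum_i\theta_i=4\pi$, at most four indices can satisfy this, so the hull has at most four vertices, and at a hull vertex with $m_i=0$ the two neighbours are antipodal, giving a collinear pair.

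The step I expect to be the real obstacle is converting this into a contradiction for $n\ge5$. The angle count only bounds the number of non-origin-convex vertices, and hence the number of hull vertices, by four; it is blind to the interior, reflex-type vertices that a non-convex fundamental polygon may hide. To close the gap I would feed back the lattice-minimality property—by Pick's theorem the only lattice points of the fundamental polygon are the origin and the $n$ vertices—together with the closing relation $\prod_i\left(\begin{smallmatrix}0&-1\\1&m_i\end{smallmatrix}\right)=\id$, aiming to show that a convex hull with at most four vertices and a collinear pair forces $n\le4$. I would also point out that the sequence $m_0,\dots,m_{n-1}$ is a signed analogue of the quiddity sequence of a triangulated polygon (a Conway–Coxeter frieze pattern), in which an entry equal to $1$ is precisely an \emph{ear}; phrasing the existence of a removable vertex in that language may well give a cleaner proof than the direct geometric one.
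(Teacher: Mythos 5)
Your skeleton is the same as the paper's: invert vertex insertion, reduce an arbitrary fundamental polygon by repeated vertex removal to one with four vertices, and then invoke the four-vertex analysis already carried out in the text ($\varphi_0=\beta$, $\varphi_1$ removes to $\alpha$, $\varphi_m$ with $m>1$ irreducible). You have also correctly isolated, and cleanly reformulated, the one nontrivial step: a vertex $e_i$ is removable (in the sense of being the inverse of an insertion) exactly when $m_i=1$, so everything rests on the claim that a fundamental polygon with $n\ge5$ has some $m_i=1$. Be aware that the paper does not prove this step either --- it is dispatched with ``a little thought should convince the reader''. Your extremal argument is genuine progress beyond that: a vertex uniquely maximising a generic linear functional satisfies $m_i<2$, and the identity $\sum_i(\theta_{i-1}+\theta_i)=2\sum_i\theta_i=4\pi$ with all summands positive in fact forces \emph{at most three} indices with $\theta_{i-1}+\theta_i\ge\pi$ once $n\ge5$ (slightly sharper than your four). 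Both observations are correct.

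But, as you say yourself, the argument is not closed, and the gap is real. Under your contradiction hypothesis you have shown only that the convex hull has at most three corners, each with $m_i\le0$, while every other vertex has $m_i\ge2$. Nothing so far excludes that configuration: the trace identity $\sum_i m_i=3n-12$ gives merely $\sum_{\text{corners}}m_i\le n-6$, which is entirely consistent with three corners carrying large negative entries, so no contradiction follows from counting alone. What is missing is a lower bound on how negative $m_i$ can be at an extremal vertex; for instance, taking $e_i$ of maximal Euclidean norm (hence a hull corner) one gets $|m_i|\,\|e_i\|=\|e_{i-1}+e_{i+1}\|\le\|e_{i-1}\|+\|e_{i+1}\|\le2\|e_i\|$, so $m_i\in\{-2,-1,0,1\}$ there --- but even this does not finish the job, and some further input from the unimodularity $\det[e_j,e_{j+1}]=1$ (your frieze-pattern/quiddity remark points in a sensible direction) is required. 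So: correct reduction, correct identification of the pivotal lemma, but that lemma --- the existence of an entry $m_i=1$ whenever $n\ge5$ --- remains unproved in your write-up, just as it does in the paper.
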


\section{Admissible sequences}

We have seen that up to conjugacy, a piecewise linear map is  determined by its fundamental polygon $e_0,e_1,\dots,e_{n-1}$, or equivalently, by its
sequence $m_0,m_1,\dots,m_{n-1}$, where $e_{i-1}+e_{i+1} =m_i e_i$.  Let us say that a sequence of integers is \emph{admissible} if it appear as the sequence of some map. As we saw in the previous section, the only admissible sequence of length 3 is $-1,-1,-1$.

Note that if we insert a vertex $\hat e_i=e_i+e_{i+1}$ between $e_i$ and $e_{i+1}$ in a fundamental polygon $e_0,e_1,\dots,e_{n-1}$, then $e_{i-1}+\hat e_i=(m_i+1)e_i$ and $\hat e_i+e_{i+2}=(m_{i+1}+1)e_{i+1}$. Thus the sequence $m_0,m_1,\dots,m_{n-1}$ changes to $m_0,m_1,\dots,m_i+1,1,m_{i+1}+1,\dots,m_{n-1}$. For example, applying edge insertion to the sequence $-1,-1,-1$ gives the sequence $0,-1,0,1$.

By Theorem \ref{T:vi}, the admissible sequences of length $\geq 4$ are the sequences that can be obtained by vertex insertion, as described in the previous paragraph, starting with the sequence $0,-m,0,m$, where $m\geq 0$.
Notice that for the sequence $-1,-1,-1$ and the sequences $0,-m,0,m$, the sequence sum is $3n-12$, where $n$ is the length of the sequence. Since vertex insertion increases the length by 1 and the sequence sum by 3, we have that all admissible sequences of length $n$ have sum $3n-12$.
 In a sequence $m_0,m_1,\dots,m_{n-1}$, the entry $m_i$ is the trace of the map $f_i$, as described in the previous section. So $f_i$ is elliptic (resp.~parabolic, resp.~hyperbolic) if $|m_i|<2$ (resp.~$|m_i|=2$, resp.~$|m_i|>2$). The sequence sum, $\sum_i m_i$,  is the sum of the traces of the maps $f_i$. So the average trace is $3-\frac{12}n$. Thus Theorem \ref{T:vi} has the following corollary.

 \begin{corollary}
Consider a piecewise linear map of the plane  of period $n$ and rotation number $\frac1n$, composed of linear parts $f_i$ for $i=0,\dots,n-1$.
Then the average trace of the $f_i$ is $3-\frac{12}n$.
In particular, as $n$ tends to infinity the average of the traces  tends to $3$.\end{corollary}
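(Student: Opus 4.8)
The plan is to reduce everything to a single numerical invariant of the sequence, namely the sequence sum $S=\sum_{i=0}^{n-1}m_i$. Since the previous section established that $m_i=\tr(f_i)$, this $S$ is exactly the sum of the traces of the linear parts, and the average trace is $S/n$. So it suffices to prove that $S=3n-12$; dividing by $n$ then yields the average trace $3-\frac{12}{n}$, which visibly tends to $3$ as $n\to\infty$.

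The first move is to invoke Theorem \ref{T:vi}, which guarantees that every piecewise linear map of order $n$ and rotation number $\frac1n$ is obtained by repeated vertex insertion starting from one of the base maps $\alpha$, $\beta$, or $\varphi_m$ with $m>1$. This is what makes an inductive argument legitimate: it ensures that every admissible sequence is actually reached by some chain of insertions, so that an invariant verified on the base cases and preserved under insertion must hold for all of them.

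The induction itself has two routine parts. For the base of the induction, the map $\alpha$ has length $n=3$ and sequence $-1,-1,-1$, giving $S=-3=3\cdot 3-12$, while $\beta$ and $\varphi_m$ have length $n=4$ and sequence $0,-m,0,m$, giving $S=0=3\cdot 4-12$; in every base case $S=3n-12$. For the inductive step, inserting a vertex $\hat e_i=e_i+e_{i+1}$ replaces the sequence $m_0,\dots,m_{n-1}$ by $m_0,\dots,m_i+1,1,m_{i+1}+1,\dots,m_{n-1}$, as computed earlier; this increases the length by $1$ and the sum by $1+1+1=3$. Hence if $S=3n-12$ before an insertion, then afterwards $S+3=3(n+1)-12$, so the relation $S=3n-12$ is preserved. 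By induction along any insertion chain realizing $f$, we obtain $S=3n-12$ in general, and dividing by $n$ gives the claimed average trace $3-\frac{12}{n}$.

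I do not expect a genuine obstacle here, since the two substantive ingredients are already in hand: the reduction to the three families of base maps is precisely Theorem \ref{T:vi}, and the effect of vertex insertion on the sequence (the replacement $m_i,m_{i+1}\mapsto m_i+1,\,1,\,m_{i+1}+1$) has already been recorded. The only point demanding a little care is conceptual rather than computational: one must use Theorem \ref{T:vi} to justify that the inductive invariant, though checked only on base cases and on a single insertion, in fact governs \emph{every} admissible sequence.
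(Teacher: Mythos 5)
Your proposal is correct and follows essentially the same route as the paper: the authors likewise verify the sum $3n-12$ on the base sequences $-1,-1,-1$ and $0,-m,0,m$, note that vertex insertion adds $1$ to the length and $3$ to the sequence sum, invoke Theorem \ref{T:vi} to cover all maps, and divide by $n$ using $m_i=\tr(f_i)$. No discrepancies to report.
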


Although vertex insertion gives an algorithm for producing all admissible sequences, there is no simple formula giving the admissible sequences of a particular length. The combinatorial difficulties involved should become apparent in the next section.

\section{Fundamental  polygons}

Because  fundamental  polygons with 4 vertices have two collinear vectors, and since the fundamental  polygons of higher order are built by vertex insertion, it follows that all  fundamental  polygons with at least 4 vertices have  two collinear vectors, which we may take to be $e_0=(1,0)^t$ and $e_k=(-1,0)^t$. So in order to describe all  fundamental polygons $P$ it suffices to describe the intersection of $P$ with the upper and lower half planes. Such a half plane  is formed by $k+1$ distinct nonzero vectors $e_0,\dots,e_{k}=-e_0$ arranged in the plane in anti-clockwise order, with integer coordinates with nonnegative $x$-coordinate, such that the triangles $0,e_i,e_{i+1}$ all have area $\frac12$. But as we explained above, up to an integer shear in the $x$-direction,  we may take $e_0=(1,0)^t$ and $e_1=(0,1)^t$. So we actually only need to describe the $k-2$ vectors $e_2,\dots,e_{k-1}$ of a fundamental  polygon in the second quadrant. These structures are in a one to one correspondence with rooted binary trees having $k-2$ nodes. To avoid making this too notationally cumbersome, we will simply describe this correspondence with an example.

As we have described in the previous section, the fundamental polygon is given by a number of vertex insertions. Each vertex insertion opens up the possibility of two further vertex insertions. This produces a binary tree whose nodes are labelled by the vertices that are inserted. Figure \ref{F:tree} shows the complete tree of possible insertions of height 4. A particular fundamental polygon is given by a subtree commencing at the root. For example, the fundamental polygon shown in Figure~\ref{F:up} corresponds to the tree shown in Figure \ref{F:treeEx}. Note that the tree appears naturally inside the fundamental polygon, as depicted in Figure~\ref{F:up}.

\begin{figure}
\begin{tikzpicture}[->,>=stealth',level/.style={sibling distance = 8.2cm/#1,
  level distance = 1.5cm}]
\node   {$(-1,1)$}
    child{ node  {$(-2,1)$}
            child{ node  {$(-3,1)$}
            	child{ node  {$(-4,1)$}}
							child{ node  {$(-4,2)$}}
            }
            child{ node  {$(-3,2)$}
							child{ node  {$(-5,3)$}}
							child{ node  {$(-4,3)$}}
            }
    }
    child{ node  {$(-1,2)$}
            child{ node  {$(-2,3)$}
							child{ node  {$(-3,4)$}}
							child{ node  {$(-3,5)$}}
            }
            child{ node {$(-1,3)$}
							child{ node  {$(-2,5)$}}
							child{ node  {$(-1,4)$}}
            }
		}
;
\end{tikzpicture}
\caption{The vertex insertion tree of height 3}\label{F:tree}
\end{figure}

\begin{figure}[h!]
\begin{tikzpicture}[scale=2.]

\fill[yellow](0,0)--(1,0)--(0,1);
\fill[green](0,0)--(0,1)--(-1,2);
\fill[yellow](0,0)--(-1,2)--(-1,1);
\fill[green](0,0)--(-1,1)--(-3,2);
\fill[yellow](0,0)--(-2,1)--(-3,2);
\fill[green](0,0)--(-3,1)--(-2,1);
\fill[yellow](0,0)--(-3,1)--(-4,1);
\fill[green](0,0)--(-1,0)--(-4,1);

\path [draw,line width =0.25pt] (1,0)--(0,1)--(-1,2)--(-1,1)--(-3,2)--(-2,1)--(-3,1)--(-4,1)--(-1,0)--(1,0);
\path [draw,line width =0.25pt] (0,0)--(1,0);
\path [draw,line width =0.25pt] (0,0)--(-1,0);
\path [draw,line width =.25pt] (0,0)--(-1,2);
\path [draw,blue,line width =2pt] (0,0)--(-1,1);
\path [draw,blue,line width =2pt] (-1,2)--(-1,1);
\path [draw,blue,dashed,line width =2pt] (-2,1)--(-1,1);
\path [draw,blue,line width =2pt] (-2,1)--(-3,2);
\path [draw,blue,line width =2pt] (-2,1)--(-3,1);
\path [draw,blue,line width =2pt] (-4,1)--(-3,1);
\path [draw,line width =0.25pt] (0,0)--(-2,1);
\path [draw,line width =0.25pt] (0,0)--(-3,1);
\path [draw,line width =0.25pt] (0,0)--(-4,1);
\path [draw,line width =0.25pt] (0,0)--(-3,2);
\path [draw,line width =0.25pt] (0,0)--(0,1);

\fill [black] (1,0) circle (1.5pt);
 \fill [black] (-1,0) circle (1.5pt);
 \fill [black] (-1,2) circle (1.5pt);
 \fill [black] (-1,1) circle (1.5pt);
 \fill [black] (-2,1) circle (1.5pt);
 \fill [black] (-3,1) circle (1.5pt);
 \fill [black] (-4,1) circle (1.5pt);
 \fill [black] (-3,2) circle (1.5pt);
\fill [black] (0,1) circle (1.5pt);

\draw[color=black,line width =0.25pt] (-3.5,0) -- (1.5,0);
\draw[color=black] (0,-.3) -- (0,2.2);
\draw[color=black,line width =0.25pt] (0pt,-10pt);

\end{tikzpicture}
\caption{A fundamental polygon in the upper half plane}\label{F:up}
\end{figure}
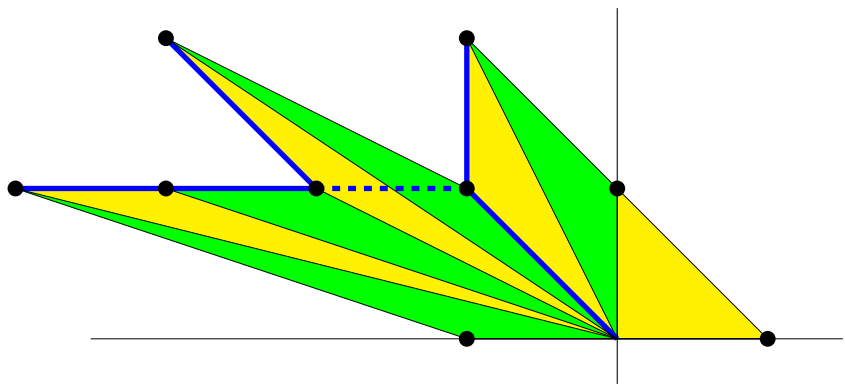

\begin{figure}
\begin{tikzpicture}[->,>=stealth',level/.style={sibling distance = 8.2cm/#1,
  level distance = 1.5cm}]
\node   {$(-1,1)$}
    child{ node  {$(-2,1)$}
            child{ node  {$(-3,1)$}
            	child{ node  {$(-4,1)$}}
            }
            child{ node  {$(-3,2)$}
            }
    }
    child{ node  {$(-1,2)$}
            	}
;
\end{tikzpicture}
\caption{The binary tree of Figure \ref{F:up}}\label{F:treeEx}
\end{figure}

A general fundamental polygon is determined, up to a global linear conjugation, by two rooted binary trees and an integer: one rooted binary tree for the fundamental polygon in the upper half plane, one rooted binary tree for the fundamental polygon in the lower half plane, and an integer $m$ determining a horizontal shear $\left(\begin{smallmatrix}1&m\\
0&1
\end{smallmatrix}\right) $ of the lower half plane. Finally, to uniquely determine the fundamental polygon, one needs to take account of the fact that one fundamental polygon might be obtained from another by the map $(x,y)\mapsto (-x,-y)$.

\bibliographystyle{amsplain}
\bibliography{mb}
 \end{document}